\newcommand*{\rom}[1]{\expandafter\@slowromancap\romannumeral #1@}
\theoremstyle{definition}
\newtheorem{fact}{fact}
\newtheorem{thm}[fact]{Theorem}
\newtheorem{lemma}[fact]{Lemma}
\newtheorem{prop}[fact]{Proposition}
\newtheorem{defini}[fact]{Definition}
\title{Generalized Effective Reducibility}
\author{Merlin Carl}
\date{}
\begin{document}

\maketitle

\begin{abstract}
%We consider effective mathematics with Ordinal Turing Mchines (OTMs), in particular two reducibility relations for set-theoretic $\Pi_{2}$-statements, one of which resembles Turing reducibility while the other is modelled after Weihrauch reducibility.
We introduce two notions of effective reducibility for set-theoretical statements, based on computability with Ordinal Turing Machines (OTMs), one of which resembles Turing reducibility while the other is modelled after Weihrauch reducibility.
We give sample applications by showing that certain (algebraic) constructions are not effective in the OTM-sense and considerung the effective equivalence of various versions of the axiom of choice.
\end{abstract}

\section{Introduction}

%Try to reproduce Hodges' methods (forcing and cardinality).\\

From a sufficiently remote point of view, construction problems in mathematics can be seen as multi-valued, class-sized `functions' from the
set-theoretical universe $V$ to itself. Example of construction problems would be the problem assigning to fields their algebraic closures,
to sets their well-orderings, to integrable functions their stem functions, to linear orderings their completions etc. Formally, this makes a construction problem
a (class-sized) relation $R\subseteq V\times V$.

A `solution' to or `canonification' of a construction problem $R$ is then a (class-sized) witness `function' $F:V\rightarrow V$ such that, for all $x$ in the domain of $R$,
we have $R(x,F(x))$ and otherwise $F(x)=\emptyset$. Similarly, we can say that $F$ witnesses the truth of a set-theoretical statement $\phi$ of the form $\forall{x}\exists{y}\psi$
if $F$ is a solution for $\{(x,y):\psi(x,y)\}$, the most natural candidates to consider being $\Pi_{2}$-statements, since $\psi$ can be assumed to be absolute between transitive sets
in that case.

Fixing an appropriate notion of effectiveness for set-theoretical constructions, we can now ask for specific construction problems $R$ whether there exists an \textbf{effective} solution for $R$ and similarly,
whether some statement $\phi$ is `effectively true'. Moreover, we can ask whether a construction or a statement `effectively reduces' to another.

In the following, `effectiveness' will be interpreted to mean computability by Ordinal Turing Machines (OTMs) without ordinal parameters. It was argued in \cite{Ca} that OTM-computations are appropriate as a formalization
of the intuitive notion of a `transfinite effective procedure'. One indication is the equivalence of this with various other `maximal' models of ordinal computability, such as Ordinal Register Machines \cite{ORM}
or ordinal $\lambda$-calculus (\cite{Sey}, \cite{Fi}).

The definition and basic results on OTMs can be found in \cite{Ko1}. We merely briefly recall the model here: An OTM-program is just a normal Turing machine program with the usual (finite) set of commands
for reading and writing symbols, moving the read/write-head and changing the inner state. We assume that the inner states are indexed with natural numbers. 
The `hardware' of an OTM consists of a class-sized tape with cells indexed by ordinals. Each cell may contain
a $0$ or a $1$. The working time of an OTM is again the whole class of ordinals. At successor times, an OTM behaves like an ordinary Turing machine. At limit times, the head position, the inner state and the content of 
the $\iota$-th cell for each $\iota\in\text{On}$ are determined as the inferior limit of the sequence of earlier head positions, and inner states and contents of the $\iota$-th cell. If the 
read/write-head is asked to move to the left while currently occupying a cell with limit index, it is reset to the $0$th cell.

For convenience, we assume that our machines work with three tapes, a `miracle' tape (to be explained below), a scratch tape and an output tape. 
The single-tape model can easily be adapted to this setting.

%This is mere sugaring, as it is shown in (M. Klein?) that multitape OTMs have the same
%strength as their single-taped siblings.

%Struktur: 

%(1) Einleitung: motivate and explain OTMs. we use otms with two tapes, which is equivalent to otms with a single tape. 
%Weihrauch reducibility, effective mathematics in general areas should be interesting, relationship with reverse mathematics, general classification theory...

%(2) definitions: effectivity of $\Pi_{2}$-sentences, reducibility and generalized weihrauch reducibility

%(3) Hodge's lemmas hold for our notion of effectiveness. Thus none of the constructions he shows to be impossible is possible with our approach.

%(4) Non-effective constructions for cardinality reasons are not reducible to constructions that do not raise cardinalities.

%(5) effective equivalence of choice principles: AC and AC$^{\prime}$ are effectively equivalent and AC is effectively reducible to WO, but not vice versa.

%(6) Further ideas: Classify choice principles, classify and order Hodge's principles and more. Classification theory: Classifiable when a complete invariant is computable.

%General idea, ICTT, Hodges: Effectivity of Field constructions.

\section{Basic Methods and Notions}

Our goal is to apply OTM-computability to general mathematical constructions.
To make this approach work, we need a way to represent arbitrary sets in a way suitable as an input format for OTMs. OTMs work on
a class-sized tape indexed with ordinals; a set $x$ will hence need to be represented as a set of ordinals. This can be achieved in a rather straightforward manner:

\begin{defini}
Let $x$ be a set, $t=\text{tc}(x)$ the transitive closure of $x$, $\alpha\in\text{On}$ and $f:\alpha\rightarrow\text{tc}(x)$ a well-ordering of $\text{tc}(x)$
in the order type $\alpha$. We define $c_{f}(x)$, the $f$-code for $x$, recursively as the following set or ordinals:
$c_{f}(x):=\{p(f^{-1}(y),\beta):y\in x\wedge \beta\in c_{f|y}(y)\}$, where $p$ denotes Cantor's ordinal pairing function. We say that $A\subseteq\text{On}$
`is a code for' or `codes' the set $x$ if and only if there is some $f$ for which $A=c_{f}(x)$. We write rep($\tau,x$) to indicate that $\tau$ codes $x$.
\end{defini}

\textbf{Remark}: By a certain abuse of notation, if $x$ is a set, we will sometimes write $c(x)$ for an `arbitrary' code for $x$.

We can now talk about OTM-computability of arbitrary functions from $V$ to $V$:

\begin{defini}
Let $F:V\rightarrow V$ be a functional class. We say that $F$ is OTM-computable if and only if there is an OTM-program $P$ such that, for every set $x$ and every tape content $\tau$, if rep$(\tau,x)$,
then $P(\tau)$ converges to output $\sigma$ such that rep$(\sigma,F(x))$, i.e. $P$ takes representations of $x$ to representations of $F(x)$.
\end{defini}

By this definition, the representation of a set $x$ will depend on the choice of a well-ordering of $\text{tc}(x)$. The output of a computation on input $x$
may hence depend on the choice of the representation of $x$. This is fine as long as only the output, but not the object coded by the output, depends on the
choice of the input representation. 

This allows us to make our notion of `effectivity' precise:

\begin{defini}
Let $R\subseteq V\times V$ be a construction problem. Then $R$ is effectively solvable if and only if there is an OTM-computable solution $F$ for $R$.
Moreover, a set-theoretical $\Pi_{2}$-statement $\forall{x}\exists{y}\phi(x,y)$ (where $\phi$ is $\Delta_{0}$) is effective if and only if
the construction problem $\{(x,y)\in V\times V:\phi(x,y)\}$ is effectively solvable. We write $R_{x}$ for $\{y:(x,y)\in R\}$.
\end{defini}

One may now inquire whether various well-known construction problems and $\Pi_{2}$-statements are effective. Such questions were studied by Hodges in \cite{Ho2}, though with
a different notion of effectivity based on Jensen and Karps primitive recursive set functions. We note here that the two methods Hodges uses also work for our model, which allows us to carry over
results.

The following lemma corresponds to Hodges' `cardinality method', i.e. Lemma 3.2 of \cite{Ho2}:

\begin{lemma}{\label{CardMethod}}
 Let $\alpha\in On$, and let $R\subseteq V\times V$ be such that, for some cardinal $\kappa>\alpha$,
there is $x\in V$ such that $|x|=\kappa$, $R_{x}\neq\emptyset$ and $\forall{y\in R_{x}}\text{card}(y)>\kappa$.
Then no witness function for $R$ is $OTM$-computable in the parameter $\alpha$.\\
Consequently, if $R$ is such that there are such $\kappa$ and $x$ for every $\alpha\in On$, then
no witness function for $R$ is parameter-$OTM$-computable.\\
In particular, if, for some $x$ of infinite cardinality, $R_{x}\neq\emptyset$
and $\forall{y\in R_{x}}\text{card}(y)>\text{card}(x)$ then no witness function for $R$ is parameter-free $OTM$-computable.
\end{lemma}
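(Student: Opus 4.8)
The plan is to argue by contradiction, exploiting the fact that a halting OTM-computation whose input and parameters lie within the first $\lambda$ cells of the tape, $\lambda$ an infinite cardinal, must halt in fewer than $\lambda^{+}$ steps. Suppose then that a witness function $F$ for $R$ is computed by an OTM-program $P$ with the ordinal parameter $\alpha$, and fix a cardinal $\kappa>\alpha$ (which we may assume infinite) and a set $x$ as in the hypothesis. First I would choose a well-ordering of $\mathrm{tc}(x)$ of order type $|\mathrm{tc}(x)|$ and pass to the associated code $\tau$ for $x$; since the pairing function $p$ maps $\lambda\times\lambda$ into $\lambda$ for every infinite cardinal $\lambda$, an induction on rank shows $\tau\subseteq|\mathrm{tc}(x)|$. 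In the situations the lemma is applied to one has $|\mathrm{tc}(x)|=|x|=\kappa$ (in general $\kappa$ should be read here as $|\mathrm{tc}(x)|$), so $\tau\subseteq\kappa$; I would then run $P$ on $\tau$ with the parameter $\alpha<\kappa$, observing that both occupy only cells of index $<\kappa$.

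Applying the bounding lemma for OTM-computations (see \cite{Ko1}, \cite{Ca}) with $\lambda=\kappa$, the computation $P(\tau)$ --- which halts, since $F$ is total and is computed by $P$ --- halts in some number $\theta<\kappa^{+}$ of steps. As the head moves by at most one cell per step and is reset to cell $0$ at limit stages, every cell visited during the run has index below $\theta$, so the output $\sigma$ is contained in $\theta$, whence $|\sigma|\le|\theta|\le\kappa$.

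It remains to note that a code of cardinality at most $\kappa$ can only code a set of cardinality at most $\kappa$: unwinding the construction of codes, a code $A$ for a set $y$ essentially enumerates the membership relation on $\mathrm{tc}(y)$, so $|\mathrm{tc}(y)|\le\aleph_{0}+|A|$ and a fortiori $|y|\le\aleph_{0}+|A|$. Applied to $A=\sigma$ and $y=F(x)$ this gives $\mathrm{card}(F(x))\le\kappa$; but $F$ is a witness function, so $F(x)\in R_{x}$, and the hypothesis then forces $\mathrm{card}(F(x))>\kappa$ --- a contradiction. The two displayed consequences follow at once: if for every $\alpha\in\mathrm{On}$ such a pair $\kappa,x$ exists, then by the main statement no fixed ordinal parameter can help, so no witness function for $R$ is parameter-OTM-computable; and if $R_{x}\neq\emptyset$ with $\mathrm{card}(y)>\mathrm{card}(x)$ for every $y\in R_{x}$ and $x$ of infinite cardinality, apply the main statement with $\kappa=\mathrm{card}(x)$, using that a parameter-free computation is one with parameter $0<\kappa$.

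The substantive ingredient, and hence the expected main obstacle, is the bounding lemma invoked in the second paragraph; the rest is routine bookkeeping with codes. It is exactly at this point that the hypothesis $\kappa>\alpha$ is needed, since a parameter $\geq\kappa$ would allow $P$ to write out a code for, say, $\kappa^{+}$ outright and so defeat the cardinality count. The bounding lemma is itself proved by the standard condensation argument: if $P(\tau)$ were still running at stage $\kappa^{+}$, then collapsing a suitable small elementary substructure would reveal the computation to be eventually periodic in a way incompatible with it ever halting.
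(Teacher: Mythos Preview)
Your proposal is correct and follows essentially the same line as the paper: both reduce the claim to the bounding fact that a halting OTM-computation on an input coded in $\kappa$ (with parameter $<\kappa$) terminates in fewer than $\kappa^{+}$ steps, and hence produces an output of size $\le\kappa$. The only cosmetic difference is that the paper proves this bounding fact inline via the $\Sigma_{1}$-Skolem hull of $\kappa\cup\{c\}$ in $L_{\delta}[c]$ and its transitive collapse (exactly the condensation argument you sketch in your last paragraph), whereas you invoke it as a known lemma; your parenthetical caveat that one really needs $|\mathrm{tc}(x)|=\kappa$ to get $\tau\subseteq\kappa$ is also glossed over in the paper with a ``without loss of generality''.
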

\begin{proof}
%Suppose that $x$ is as in the formulation, $c(x)$ is a code for $x$ with supremum $\beta$, and let $\mu=\beta^{+}$. Assume for a contradiction that $P$ computes a witness function for $R$ in the parameter $\alpha$.
%Work in $L[c(x)]$. Let $\rho$ be the running time of $P^{c(x)}(\alpha)$. Then $L_{\rho^{+}}[c(x)]$ contains the computation $C$ of $P^{c(x)}(\alpha)$. Let $H$ be the $\Sigma_{1}$-Skolem hull of $\beta\cup\{c(x)\}$
%in $L_{\rho^{+}}[c(x)]$; the collapsing map $\pi$ will send $c(x)$ to itself, so the collapse will have the form $L_{\gamma}[c(x)]$ with $\gamma<|\beta|^{+}$. Moreover, we have $\pi(C)=C$, so $|C|\leq|\beta|$; in particular,
%$P^{(c(x)}(\alpha)$ runs for less than $|\beta|^{+}=\kappa^{+}$ many steps.
Clearly, in less then $\kappa^{+}$ many steps, the machine cannot write a code of a structure of cardinality $>\kappa$.

It hence suffices to show that, when $P$ is an OTM-program and $P$ is given a (code $c$ of a) set $x$ of size $\kappa\geq\omega$ for input and the computation halts, then the output of the computation will be of size $\leq\kappa$.
This follows if we can show that the computation will take less than $\kappa^{+}$ many steps, since $P$ can write at most $\alpha$ many symbols in $\alpha$ many steps. Suppose for a contradiction
that $P$ takes $\lambda>\kappa$ many steps, and let $\delta$ be the smallest cardinal $>\lambda$. Let $H$ be the $\Sigma_{1}$-Skolem hull of $\kappa\cup\{c\}$ in $L_{\delta}[c]$ and let $M$ denote the transitive collapse of $H$.
We may assume without loss of generality that $c\subseteq\kappa$, so that we have $c\in M$; as $L_{\delta}[c]$ contains the computation of $P$ in the input $c$, so does $H$ and hence there is $S\in M$ such that
$M$ believes that $S$ is the computation of $P$ with input $c$. By transitivity of $M$ and absoluteness of computations, $S$ is actually the computation of $P$ with input $c$. Since $S$ is contained in a transitive
set of cardinality $\kappa$, $|S|\leq\kappa$, so the length of the computation is $<\kappa^{+}$, as desired.

%Via condensation, we can show in the usual way that, for infinite input, the running time of an $OTM$ must be less than the successor of the input supremum
%plus the parameter. 
\end{proof}

There is also an analogue of the `forcing method' (Lemma 3.7 of \cite{Ho2}), which is given in Lemma \ref{reducibility} below. 

\smallskip

\textbf{Convention}: For many of the following results, we will need the existence of generic filters for various partial orderings in $L$ and some of its (symmetric) extensions. To avoid technical complications,
we use as a shortcut an extra assumption that guarantees the existence of such filters. $0^{\sharp}$ is more than enough for our purposes, and we assume from now on that it exists.\footnote{For some of the following
results, this assumption is actually necessary: It is e.g. not hard to check that all choice principles considered in section $4$ are effective (and hence trivially reducible to each other) if $V=L$.}

\smallskip

%The `forcing method' (Lemma 3.7 of \cite{Ho2}) carries over as well:

%\begin{lemma}{\label{forcingmethod}}
%Let $\phi$ be a set-theoretical $\Pi_{2}$-statement, and suppose that there is a transitive class-sized model $M$ of ZF such that $M\models\neg\phi$.
%Then $\phi$ is not effective.
%\end{lemma}
%\begin{proof}
%Below, we will prove the stronger statement Lemma \ref{mainlemma}.
%\end{proof}

These lemmata can be seen as expressing the intuition that neither the power set operation on infinite set nor the use of the axiom of choice are `effective', not
even in a very idealized sense. We note some sample applications.

\begin{lemma}{\label{someconstructions}}
None of the following construction problems is effectively solvable:
\begin{enumerate}
\item Field to its algebraic closure
\item Linear ordering to its completions
\item Set to its (constructible) power set 
\item Set to its well-orderings
\end{enumerate}
\end{lemma}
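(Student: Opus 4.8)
The plan is to establish the four non-effectivity statements by combining the two tools already developed: the cardinality method (Lemma \ref{CardMethod}) disposes of the constructions that strictly raise cardinality, while the forcing method (Lemma \ref{reducibility}) handles the two that are cardinality-neutral but depend on a non-canonical choice.

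First I would treat item (2) with Lemma \ref{CardMethod} applied to $x=(\Q,<)$. Here $\card(x)=\aleph_0$, $\Q$ does possess a completion, and every complete linear order in which $\Q$ sits densely must realize all Dedekind cuts of $\Q$, of which there are $2^{\aleph_0}$; hence every $y\in R_x$ has $\card(y)\geq 2^{\aleph_0}>\aleph_0$, which is exactly the hypothesis of the ``in particular'' clause. For item (3) read as ``$x\mapsto\mathcal P(x)$'', the same lemma applies directly with $x=\omega$ and $\card(\mathcal P(\omega))=2^{\aleph_0}>\aleph_0$. The constructible power set needs a small detour, since under our standing assumption ($0^\sharp$ exists) $\mathcal P(\omega)\cap L$ may be countable in $V$, so the cardinality bound fails in $V$; instead I would use that OTM-computations on a fixed code are absolute between $V$ and $L$: a parameter-free OTM solving ``$x\mapsto\mathcal P(x)\cap L$'' would, run in $L$ on the canonical code of $\omega$ (which lies in $L$), halt with an output that $L$ decodes as a code of $\mathcal P(\omega)$ itself, so $L$ would witness that the power-set construction is effective -- contradicting Lemma \ref{CardMethod} applied inside $L$, whose proof is a theorem of $\mathrm{ZF}$ and relativizes.

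Items (1) and (4) resist the cardinality method (the algebraic closure of an infinite field has the same cardinality as the field, and a well-ordering of a $\kappa$-sized set is again $\kappa$-sized), so I would derive them from the forcing method of Lemma \ref{reducibility}. For (4), starting in $L$ and passing to a standard symmetric extension in which a set $a$ of mutually generic Cohen reals carries no well-ordering definable from ground-model parameters, one gets $a$ with $R_a\neq\emptyset$ in $V$ yet no OTM-computable -- hence, by absoluteness of computations together with the homogeneity of the forcing, no sufficiently definable -- $F$ can output a well-ordering of $a$; it is precisely for the existence of the relevant generics that we assumed $0^\sharp$. For (1) I would run the analogous argument around the classical failure of a canonical algebraic closure: a field $k\in L$ (say $\Q$, exploiting that $\bar\Q$ has non-trivial automorphisms, or a field tailored to have two ``symmetric'' algebraic closures) together with a symmetric construction yielding two algebraic closures of $k$ that are interchanged by an automorphism fixing everything definable over the ground model, so that no effective $F$ can consistently pick one of them.

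The cardinality-method bookkeeping for (2) and (3) is routine. I expect the forcing-method half to be the real work: arranging the symmetric extensions and the homogeneity arguments for (1) and (4) so that they mesh cleanly with the absoluteness of OTM-computations on which Lemma \ref{reducibility} rests, and -- for the algebraic closure -- identifying the right field and the precise sense in which its two closures are interchangeable over the ground model.
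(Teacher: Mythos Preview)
Your handling of (2), (3), and (4) is essentially the paper's. For (2) and the full power set in (3) you invoke Lemma~\ref{CardMethod} exactly as the paper does; for the \emph{constructible} power set you are in fact more careful than the paper, which simply writes ``easy applications of Lemma~\ref{CardMethod}'' without noting that under the standing $0^\sharp$ hypothesis one has $|\mathcal P(\kappa)\cap L|\le\kappa$ in $V$ for every infinite cardinal $\kappa$, so the cardinality method does not apply directly in $V$. Your absoluteness detour through $L$ is a legitimate fix. For (4) you sketch the same symmetric-extension idea that the paper obtains by forward reference to Lemma~\ref{mainlemma} (and ultimately to the model built for Theorem~\ref{nonreducible}).

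The gap is in (1). Using $\Q$ will not work: since $\Q$ is rigid and carries a code-independent enumeration recoverable from the field operations alone, an OTM can, given any code for a copy $Q$ of $\Q$, adjoin formal roots of irreducible polynomials in a fixed canonical order and output a specific algebraic closure whose underlying set depends only on $Q$, not on the chosen code. The nontriviality of $\mathrm{Aut}(\bar\Q)$ is irrelevant, because the problem only asks for \emph{some} algebraic closure. Your fallback picture of ``two symmetric algebraic closures interchanged by an automorphism of the model'' is also not quite the right shape: algebraic closures are unique up to isomorphism, and swapping two concrete ones does not by itself exclude a third, automorphism-fixed one lying in the model. What one actually needs---and what Hodges' Theorem~4.1, to which the paper simply defers, provides---is a specific field $K$ sitting in a symmetric extension $N$ of $L$ such that $N$ contains \emph{no} algebraic closure of $K$ whatsoever; then Theorem~\ref{closedness} forces any putative OTM-computed closure to land in $N$, a contradiction. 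You correctly flag ``identifying the right field'' as the real work, but the proposal does not yet supply it, and the one concrete suggestion you make ($\Q$) points in the wrong direction.
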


\begin{proof}
(1) can be proved by an easy adaption of the proof of Theorem $4.1$ of \cite{Ho2}. There is only one point that requires a little care, namely the use of countable transitive models in that proof: For it might happen that
an OTM-program $P$ that halts in $V$ does not halt in such a model $M$.\footnote{For example, suppose there is some minimal countable $\alpha$ such that $L_{\alpha}\models\text{ZFC}$. Then the OTM-program that writes $L$ on the tape
until an $L$-level satisfying ZFC will halt in $V$, but not inside $L_{\alpha}$.} However, a check of Hodge's proof reveals that the countability of the ground model serves no purpose but to guarantee the existence of generic filters. 
We can hence circumvent this problem by doing the construction over $L$, using $0^{\sharp}$ to guarantee the existence of the required filters.

(2) and (3) are easy applications of Lemma \ref{CardMethod}.

(4) follows from Lemma \ref{mainlemma} below.

%remark what is hodges and what is you and what is used where. With the results of Hodges and Hodges-Shelah, we can see that the power set construction is not effective, and neither is the construction taking a field to its
%algebraic closure (see Naturality and Definability II, towards the end).

%to use hodges(/shelah), we need to adapt to their use of countable transitive models: OTM-computations may terminate in V, but not in a c.t.m. if the c.t.m. is too `short'. one can either force over L instead and check that countability is
%only used to guarantee generic filters or use sufficiently long c.t.m.s - e.g. the collapse of the elementary hull in $L_{\kappa}$, where $\kappa$ is inaccessible in $L$. in this case, it remains to check that in the generic extension still 
%no computation runs `out of the model'.
\end{proof}

It is, on the other hand, not hard to see that e.g. the construction problem of taking a ring to its quotient field is effectively solvable as in \cite{Ho2}.
The intuitions captured by Hodges' approach are hence preserved in our framework.

%(3) A `construction' is a multi-valued class function $C:V\rightarrow V$. A construction is effective when there is a single-valued class function $F:V\rightarrow V$
%such that, for every $x\in V$, $F(x)$ is among the values of $C(x)$ and $F$ is OTM-computable. Examples of constructions are taking sets to power-sets,
%taking fields to algebraic closures, taking functions to stem functions, taking linear orderings to completions... This means that a construction taking sets
%to (one of various) other sets if and only if there is a program that computes representation of images from representations of objects such that representations
%of the same object lead to representations of the same object; however, for different input representations, the outputs may well be different, they just have to represent
%the same set. 
%One might also say that a statement of the form $\forall{x}\exists{y}\phi(x,y)$ is effective if and only if there is a program $P$ that computes,
%from the representation of any $x$, the representation of some $y$ such that $\phi(x,y)$ holds and moreover the object represented by the output (rather than the outputted representation)
%depends only on the object represented on the input (and not on the particular choice of input representation).\\

%On the other hand, taking a ring to its quotient field should be effective.
%We can thus reproduce at least some of the intuitions of Hodges in our framework.

There are certainly various interesting questions to be asked about the effectivity, or otherwise, of various construction problems or $\Pi_{2}$-statements. However,
we want to take the analogy with Turing computability a bit further: Instead of merely asking what problems are solvable, we want to consider what problems/statements
are effectively reducible to which others in the sense that, given access to a solution to one as an `oracle', one can effectively solve the other. A quite straightforward way to make this idea
precise is the following:

\begin{defini}
Assume that the OTM is equipped with an extra `miracle tape'. Let $F$ be a class function taking sets or ordinals to sets of ordinals. 
An miracle-OTM-program is defined like an OTM-program, but with an extra `miracle' command. When this command is carried out, the set $X$ of ordinals
on the miracle tape is replaced by $F(X)$. We write $P^{F}$ to indicate that $P$ is run and whenever the miracle command is applied to $X$, it is replaced by $F(X)$.\footnote{We thus make the implicit assumption that the miracle tape behaves deterministically, i.e. that,
 whenever the miracle command is applied to some $X$, the outcome will be the same. However, this property is not used anywhere in the arguments below. One may thus drop it, at the price of some extra formal complications.}
\end{defini}

\begin{defini}{\label{reducibility}}
Let $C_{1}$ and $C_{2}$ be construction problems. Then $C_{1}$ is reducible to $C_{2}$, written $C_{1}\leq C_{2}$ if and only if there is some miracle-OTM-program $P$ such that the following holds:
Whenever $F$ is a canonification of $C_{2}$ and whenever $G:V\rightarrow V$ is a class function taking each code for a set $x$ to some code for $F(y)$ and $x$ is a set and $c$ a code for $x$,
we have $P^{G}(c)\downarrow=d$, where $d$ is a code for $F(x)$.
%Construction $C_{1}$ is reducible to $C_{2}$ iff there is a program $P$ such that, for every canonification $F$ of $C_{2}$, $P^{F}$ computes a canonification of $C_{1}$.
%Thus, taking one construction as an `oracle', the other becomes solvable. 
\end{defini}

\textbf{Remark}: Note that we do not demand in the conditions on $G$ that $G(c)$ depends only on $x$ when $c$ is a code for $x$. By demanding that the same reduction works for every $G$, 
we rule out the possibility of coding extra information into the input representations.

\smallskip

Concerning this notion of reducibility, we observe that certainly a cardinality-raising construction is not reducible to one that is not:

\begin{lemma}{\label{cardraising}}
Let $C_{1}$, $C_{2}$ be construction problems. Assume that there are some canonification $F$ of $C_{2}$ and some infinite set $x$ such that, for all sets $y$,
(1) if $C_{1}(x,y)$, then $|y|>|x|$ and (2) if $y$ is infinite, then $|F(y)|\leq |y|$. Then $C_{1}\nleq C_{2}$.
\end{lemma}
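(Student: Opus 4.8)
The plan is to adapt the cardinality argument of Lemma~\ref{CardMethod} to the oracle setting. Suppose toward a contradiction that $C_{1}\leq C_{2}$ via a miracle-OTM-program $P$, and fix the canonification $F$ of $C_{2}$ and the infinite set $x$ provided by the hypotheses. Put $\kappa=|x|$ and fix a code $c\subseteq\kappa$ for $x$; then $P^{G}(c)$ must halt with a code $d$ for some $y$ with $C_{1}(x,y)$, hence with $|y|>\kappa$, for every admissible oracle $G$. First I would choose a specific, ``economical'' oracle $G$: given a code $e$ for a set $z$, let $G(e)$ be the code for $F(z)$ obtained by a fixed absolute procedure from $e$ (e.g.\ using the $L[e]$-least well-ordering of $\mathrm{tc}(F(z))$). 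By hypothesis~(2), whenever $z$ is infinite we have $|F(z)|\le |z|$, and a code for a set of size $\le\lambda$ (built from a given code via an absolute recipe) is itself a set of ordinals of size $\le\lambda$; so $G$ maps codes of size $\le\lambda$ to codes of size $\le\lambda$ for all infinite $\lambda$, and codes of finite sets to codes of size $\le\omega$ after one more harmless adjustment. The upshot is that this particular $G$ never increases cardinality of codes (above $\omega$).

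Next I would run $P^{G}(c)$ and bound the length of the computation, exactly as in Lemma~\ref{CardMethod}. The only new ingredient is the miracle command, but since our chosen $G$ is \emph{definable} (uniformly, from the oracle input, inside $L$ of a suitable height), the computation of $P^{G}$ on input $c$ is itself an absolute object: it can be reconstructed inside $L_{\delta}[c]$ for $\delta$ the least cardinal above the computation length $\lambda$, because each miracle step replaces $X$ by $G(X)$ and $G(X)$ is computed by the fixed $L[X]$-recipe, which $L_{\delta}[c]$ can carry out. Taking the $\Sigma_{1}$-Skolem hull $H$ of $\kappa\cup\{c\}$ in $L_{\delta}[c]$ and letting $M$ be its transitive collapse, $M$ contains (its version of) the computation $S$ of $P^{G}$ on $c$, and by transitivity and absoluteness $S$ is the genuine computation. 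Since $|M|=\kappa$, we get $|S|\le\kappa$, so $\lambda<\kappa^{+}$, and therefore in $<\kappa^{+}$ steps $P^{G}(c)$ can have written at most $\kappa$ symbols; hence the output code $d$ has size $\le\kappa$, so it codes a set $y$ of size $\le\kappa$. This contradicts $|y|>\kappa$, and the lemma follows.

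The main obstacle is the second paragraph's absoluteness claim: one must check that the miracle oracle $G$ chosen above is genuinely computed by a procedure absolute between $L_{\delta}[c]$ (or the relevant inner model) and $V$, so that the Skolem-hull/collapse argument still captures the true $P^{G}$-computation. This is where hypothesis~(2) is used essentially --- it guarantees that the ``absolute'' choice of a code for $F(z)$ stays within the cardinality budget, so that running the fixed recipe inside the small model $L_{\delta}[c]$ does not fail for size reasons and does not secretly need more ordinals than are available. Once one fixes the convention (already in force in the paper) that $0^{\sharp}$ exists, there is enough absoluteness and enough well-orderings available in $L$ to make this go through; everything else is a routine repetition of the proof of Lemma~\ref{CardMethod}.
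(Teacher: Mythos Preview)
Your overall strategy---pick an economical oracle $G$ and rerun the Skolem-hull argument of Lemma~\ref{CardMethod}---is the right idea and is exactly what the paper is gesturing at when it says ``as in the proof of Lemma~\ref{CardMethod}''. But there is a real gap in your implementation. You claim that your chosen $G$ is ``definable (uniformly, from the oracle input, inside $L$ of a suitable height)'' and that $L_{\delta}[c]$ can carry out the recipe for $G(X)$. This is false in general: the canonification $F$ handed to you by the hypothesis is an \emph{arbitrary} class function satisfying $|F(y)|\le|y|$; nothing says $F$ is definable, let alone definable in $L$. (For a concrete example, take $C_{2}=V\times\mathbb{R}$ and let $F$ be constantly equal to some non-constructible real.) Your recipe ``take the $L[e]$-least code for $F(z)$'' presupposes that $F(z)$ is available inside $L_{\delta}[c]$, which it need not be; so $L_{\delta}[c]$ cannot reconstruct the $P^{G}$-computation, and the hull/collapse step breaks down.

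The repair is not to try to make $G$ definable, but to carry $G$ along as an extra predicate: take an elementary substructure $N\prec (H_{\theta},\in,G)$ of size $\kappa$ with $\kappa\cup\{c\}\subseteq N$, observe that the halting $P^{G}$-computation on $c$ is first-order definable in this expanded structure, and then collapse. Absoluteness of ``$S$ is a $P$-computation with oracle $G$'' (a $\Delta_{0}$ property in the predicate $G$) does the rest. Your choice of $G$ with $|G(e)|\le\max(|e|,\omega)$ is still the right move, since it guarantees that the miracle steps do not enlarge the tape beyond $\kappa$ and that the collapse fixes the relevant oracle calls. With this correction your argument goes through; note that the paper's own proof is a two-line sketch (``OTM steps do not raise cardinality; by assumption neither does the miracle; hence the output is no larger than the input'') that suppresses exactly the bound on the halting time you are trying to supply.
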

\begin{proof}
%Suppose otherwise, and let $P$ witness the reducibility of $C_{1}$ to $C_{2}$. 
As in the proof of Lemma \ref{CardMethod} above, OTM-computable functions cannot raise cardinalities. By assumption, the miracle operation will also not raise the cardinality. Hence the output
of a program $P$ with a $C_{2}$-miracle will (for infinite input) always have at most the cardinality of the input and thus cannot in any case witness $C_{1}$.
\end{proof}

\textbf{Remark}: In particular, the construction problem of taking a valued field to its linear compactifications (see \cite{Ho2}, Theorem 4.10) is not reducible to any of the following construction problems:
Field to algebraic closure, formally real field to its real closure, field of characteristic $p$ to its separable algebraic closure. 

\smallskip

 %For $C$ a construction, call the univalent class function $F$ a canonification of $C$
%iff for every set $x$ and any representation $\tau$ of $x$, $F(x)$ represents some $y$ belonging to the values of $C(x)$ and $F$ is `set true', i.e. what the image represents
%depends only on what the pre-image represented, not on the particular representation.\\

%\textbf{Question}: Are there any reducibilities between Hodge's non-effective problems? 

The above captures the idea that one construction `helps' carrying out another. There is also a much more restrictive intuitive notion of reducibility between problems, namely
that instances of one (construction) problem can be effectively `translated' to particular instances of another: Given an instance of a problem $C_{1}$, we can first effectively turn
it into an instance of a problem $C_{2}$ and then effectively turn the solution to $C_{2}$ into a solution to $C_{1}$. Another way to view this is that $C_{2}$ may only be used once
in solving $C_{1}$. Thus, we define:

\begin{defini}{\label{generalizedWeihrauch}}
Let $C_{1}$, $C_{2}$ be construction problems. Then $C_{1}$ is generalized Weihrauch reducible to $C_{2}$, written $C_{1}\leq_{\text{gW}}C_{2}$, if and only if there are OTM-programs $P$ and $Q$
such that the following holds for all sets $x$ in the domain of $C_{1}$, every code $c$ for $x$ and every canonification $F$ of $C_{2}$:
\begin{enumerate}
\item $Q(c)$ converges to output $c^{\prime}$, where $c^{\prime}$ is a code for a set $y$ 
\item For every code $c^{\prime\prime}$ of $F(y)$, $P(c^{\prime\prime})$ converges to output $c^{\prime\prime\prime}$, where $c^{\prime\prime\prime}$ is a code for a set $z$
\item We have $C_{1}(x,z)$
\end{enumerate}
If these clauses hold, we say that $(P,Q)$ witnesses the gW-reducibility of $C_{1}$ to $C_{2}$. Also, when $F$ is a canonification, $P$ and $Q$ are OTM-programs and
$x$ is a set, we write $[P,F,Q](x)$ for the $z$ obtained by the procedure just described.

If $C_{1}\leq_{\text{gW}}C_{2}$ and $C_{2}\leq_{\text{gW}}C_{1}$, we write $C_{1}\equiv_{\text{gW}}C_{2}$.
\end{defini}

\textbf{Remark}: The name of the notion is due to its obvious resemblance with Weihrauch reducibility, which is an analogous notion for classical computability. For some results on
classical Weihrauch reducibility, see e.g. \cite{BGM}.

\smallskip

We note that reducibility notions satisfy the general order-theoretic properties of reducibility relations:

\begin{lemma}{\label{orderproperties}}
Both $\leq$ and $\leq_{\text{gW}}$ are transitive and reflexive. Consequently, $\equiv_{\text{gW}}$ and $\equiv$ are reflexive, transitive and symmetric, i.e. equivalence relations.
\end{lemma}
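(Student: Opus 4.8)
The plan is to verify reflexivity and transitivity separately for each of the two reducibility notions, and then observe that the symmetry of $\equiv$ and $\equiv_{\text{gW}}$ is immediate from their definitions as two-sided reducibilities; transitivity and reflexivity of the equivalence relations then follow formally from the corresponding properties of $\leq$ and $\leq_{\text{gW}}$.

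For $\leq$, reflexivity is witnessed by the trivial miracle-OTM-program that, on input $c$, applies the miracle command once to $c$ and outputs the result: if $G$ takes each code for $x$ to a code for $F(x)$, then this program outputs a code for $F(x)$ as required. For transitivity, suppose $C_1\leq C_2$ via $P$ and $C_2\leq C_3$ via $Q$. Given a canonification $F$ of $C_3$ and a class function $H$ taking codes for $x$ to codes for $F(x)$, the program $Q^{H}$ computes, from any code for $x$, a code for $F(x)$; varying over all inputs, $Q^{H}$ defines a legitimate "code-to-code" function $G$ for some canonification of $C_2$ — here one uses that a canonification of $C_2$ can be obtained by composing the canonification $F$ of $C_3$ with the reduction, which is exactly the content of $C_1\leq C_2$ being required to work for \emph{every} admissible $G$. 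Then running $P$ with miracle function $G:=Q^{H}$ (formally, replacing each miracle call of $P$ by a subcomputation $Q^{H}$) yields a miracle-OTM-program witnessing $C_1\leq C_3$.

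For $\leq_{\text{gW}}$, reflexivity is witnessed by taking both $P$ and $Q$ to be the identity program (copy input to output): clause (1) holds with $y=x$, clause (2) holds with $z=F(x)$ for whatever canonification $F$ one is handed, and clause (3) holds since $C_1(x,F(x))$ holds because $F$ is a canonification of $C_1=C_2$. For transitivity, suppose $C_1\leq_{\text{gW}}C_2$ via $(P_1,Q_1)$ and $C_2\leq_{\text{gW}}C_3$ via $(P_2,Q_2)$. Given an instance $x$ of $C_1$ with code $c$ and a canonification $F$ of $C_3$: first run $Q_1$ on $c$ to get a code $c'$ for an instance $y$ of $C_2$; then run $Q_2$ on $c'$ to get a code $c''$ for an instance $y'$ of $C_3$; apply the oracle to get a code of $F(y')$; run $P_2$ to obtain a code of some $z'$ with $C_2(y,z')$; then run $P_1$ on that code to obtain a code of some $z$ with $C_1(x,z)$. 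So the composite programs $(P_1\circ P_2,\,Q_2\circ Q_1)$ witness $C_1\leq_{\text{gW}}C_3$. The one subtlety to check is that $P_2$ and $Q_2$ must be applied correctly: $P_1$ expects as input a code for $F'(y)$ where $F'$ is a canonification of $C_2$, and the map $y\mapsto (\text{the }z'\text{ produced by feeding }Q_2(y)\text{ to the }C_3\text{-oracle and then }P_2)$ is indeed (the graph of) such a canonification, because that is precisely what $C_2\leq_{\text{gW}}C_3$ guarantees for every canonification $F$ of $C_3$.

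The main obstacle — such as it is — is purely bookkeeping: one must be careful that in the transitivity arguments the intermediate object produced really is a valid instance/solution of the middle problem, so that the definitions of $\leq$ and $\leq_{\text{gW}}$ (which quantify over \emph{all} canonifications, resp. all admissible code-maps $G$) can be invoked. Since the definitions were deliberately set up to quantify universally over oracles and code representations, this goes through without friction, and no genuinely hard step arises; the whole lemma is a routine diagram-chase once the composition of programs is made precise.
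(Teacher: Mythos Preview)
Your proposal is correct and follows essentially the same route as the paper: the paper simply declares reflexivity and transitivity of $\leq$ trivial, and for $\leq_{\text{gW}}$ it composes the witnessing pairs exactly as you do, observing that $[Q_{1},F,P_{2}]$ is a canonification of $C_{2}$ so that the outer pair $(P_{1},Q_{1})$ applies. Your write-up merely spells out more of the bookkeeping (in particular the oracle-substitution for $\leq$-transitivity and the identity witnesses for reflexivity) that the paper leaves implicit.
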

\begin{proof}
Reflexivity is trivial, as is transitivity for $\leq$. To see that $\leq_{\text{gW}}$ is transitive, let $C_{1}$, $C_{2}$ and $C_{3}$ be construction problems such that $C_{1}\leq_{\text{gW}}C_{2}\leq_{\text{gW}}C_{3}$,
and let $(P_{i},Q_{i})$ witness the gW-reducibility of $C_{i}$ to $C_{i+1}$, for $i\in\{1,2\}$. 
Let $P_{1}\circ P_{2}$ denote the OTM-program that first carries out $P_{1}$ and then runs $P_{2}$ on the output, and define $Q_{2}\circ Q_{1}$ likewise. We claim that $(P_{2}\circ P_{1},Q_{1}\circ Q_{2})$ witnesses
the gW-reducibility of $C_{1}$ to $C_{3}$. Let $F$ be a canonification of $C_{3}$. By definition of $Q_{1}$ and $P_{2}$, $[Q_{1},F,P_{2}]$ is a canonification
of $C_{2}$. By definition of $Q_{2}$ and $P_{1}$ then, $[Q_{2},[Q_{1},F,P_{2}],P_{1}]$ is a canonification of $C_{1}$. But it is easy to see that $[Q_{2},[Q_{1},F,P_{2}],P_{1}]=[Q_{2}\circ Q_{1},F,P_{2}\circ P_{1}]$.
\end{proof}

\begin{defini}{\label{equivalence}}
Let $C$ be a construction problem. Then $[C]$ denotes the $\equiv$-equivalence class of $C$ and $[C]_{\text{gW}}$ denotes the $\equiv_{\text{gW}}$-equivalence class of $C$.
\end{defini}

%This resembles Weihrauch reducibility. We can model it thus: Construction $C_{1}$ is generalized Weihrauch reducible
%to $C_{2}$ iff there are OTM-programs $P$ and $Q$ such that, for every canonification $F$ of $C_{2}$, $P\circ F\circ Q$ is a canonification of $C_{1}$ (where we
%confuse programs with the functions they compute).\\
%Question: Clearly, the axiom of choice (existence of systems of representation) is reducible in this sense to the well-ordering principle. does the converse hold?

\section{A Method for negative Results}

We develop a method for showing that a construction problem is not gW-reducible to another. We will work with class-sized models of ZF$^{-}$, which denotes Zermelo-Fraenkel set theory without the axiom
of powerset; more precisely, we take the formulation of ZF$^{-}$ given in \cite{GH}.

\smallskip

\textbf{Remark}: Note that the following theorem is not trivial even when $\text{ZF}^{-}$ is strengthened to full $\text{ZF}$, since a ZF model $M$ may 
contain a set $x$ without containing a suitable input format for $x$, so that the computation of an OTM cannot be simulated within $M$.

\begin{lemma}{\label{collapseset}}
Let $M\models\text{ZF}^{-}$ be transitive and suppose that $x\in M$. Then $\mathbb{P}_{x}:=\{f:\omega\rightarrow x: |f|<\omega\wedge f \text{injective}\}$ is a set in $M$.
\end{lemma}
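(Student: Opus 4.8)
The plan is to build $\mathbb{P}_{x}$ from below inside $M$, using only pairing, union, separation, replacement and induction on $\omega$, and in particular without ever forming the power set of $\omega\times x$ or any function space in one stroke. Throughout we use that $M$, being a transitive model of $\text{ZF}^{-}$, is correct about finiteness and about being a function, and satisfies the relevant instances of these schemata; so it suffices to carry out the argument ``externally'' and note that every step is available in $M$.

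First I would show that for each $n\in\omega$ the set $(\omega\times x)^{n}$ of length-$n$ sequences of elements of $\omega\times x$ is a set in $M$, by induction on $n$. For $n=0$ this is $\{\emptyset\}$. For the successor step, $(\omega\times x)^{n+1}$ is in definable bijection with $(\omega\times x)^{n}\times(\omega\times x)$; the product of two sets is a set in $\text{ZF}^{-}$ (by replacement and union), so if $(\omega\times x)^{n}$ is a set then so is $(\omega\times x)^{n+1}$, being the image of a set under a definable bijection. Applying replacement to the map $n\mapsto(\omega\times x)^{n}$ and then the union axiom, $S:=\bigcup_{n\in\omega}(\omega\times x)^{n}$ is a set in $M$.

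Next I would introduce the class function $G$ sending a sequence $s=\langle(n_{0},a_{0}),\dots,(n_{k-1},a_{k-1})\rangle\in S$ to $\{(n_{i},a_{i}):i<k\}$ when this set is an injective function contained in $\omega\times x$, and to $\emptyset$ otherwise; this is definable with parameters $\omega,x$, so $G[S]$ is a set in $M$ by replacement. Every $f\in\mathbb{P}_{x}$ is a finite injective partial function from $\omega$ into $x$, hence arises as $G(s)$ for the sequence $s$ enumerating its graph in increasing order of first coordinates, so $\mathbb{P}_{x}\subseteq G[S]$. Finally, separation yields $\mathbb{P}_{x}=\{f\in G[S]:f\text{ is a function}\wedge\text{dom}(f)\subseteq\omega\wedge\text{ran}(f)\subseteq x\wedge f\text{ injective}\}$ as a set in $M$.

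The only delicate point — and the ``main obstacle'' in an argument this elementary — is precisely the absence of power set: one cannot obtain $\mathbb{P}_{x}$ as a separation instance over $\mathcal{P}(\omega\times x)$ (or over some function space), so one is forced to manufacture the bounding set $S$ by a genuine $\omega$-iteration of the binary product, glued together by replacement and union, before separation can do the rest.
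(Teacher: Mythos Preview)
Your proof is correct and follows essentially the same route as the paper: set $y=\omega\times x$, form $\bigcup_{n\in\omega}y^{n}$ in $M$ by replacement and union, then recover $\mathbb{P}_{x}$ from this bounding set. The paper is terser and simply says ``$\mathbb{P}_{x}$ can be obtained from $A$ via separation''; you make explicit the small extra step the paper elides, namely that elements of $\bigcup_{n}y^{n}$ are \emph{sequences} rather than finite partial functions, so one needs a definable surjection (your $G$, via replacement) onto the finite subsets of $\omega\times x$ before separation applies.
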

\begin{proof}
Let $y:=x\times\omega$. For each $n\in\omega$, we have $y^{n}\in M$ and the function $F:\omega\rightarrow M$ that maps $n$ to $y^{n}$ is definable in $M$. By replacement and union,
$A:=\bigcup\{y^{n}:n\in\omega\}\in M$. Now $P_{x}$ can be obtained from $A$ via separation.
%Es sei $y:=x\times\alpha$. Sicherlich existiert f\"ur jedes $n\in\omega$ die Menge $y_{n}$ der $n$-elementigen Teilmengen von $y$ (z.B. als Menge der Bilder der Elemente von $y^{n}$ ($n$-faches kartesisches Produkt). 
%Die lassen sich uniform in $n$ definieren. Dann existiert nach Replacement auch die Menge aller dieser $y_{n}$ und nach Union auch deren Vereinigung. Daraus kann man jetzt mit Aussonderung die Elemente w\"ahlen,
%die injektive Funktionen darstellen.
\end{proof}

\begin{thm}{\label{closedness}}
Let $F$ be a computable class function, $M\models$ZF$^{-}$ transitive such that $\text{On}^{M}=\text{On}$. Assume moreover that $x\in M$ is such that there are (in $V$) two mutually 
generic $\mathbb{P}_{x}$-generic filters $G_{1}$ and $G_{2}$ over $M$. Then $F(x)\in M$. %In particular, if such an $\alpha$ and such $G_{1},G_{2}$ exist for every $x\in M$, then $M$ is closed under $F$.
\end{thm}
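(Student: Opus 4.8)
The plan is to collapse $x$ to be countable over $M$ in two mutually generic ways, simulate $F$ inside each of the two resulting extensions, and then exploit the fact that mutually generic extensions of $M$ intersect exactly in $M$.

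First I would fix an OTM-program $P$ witnessing that $F$ is (parameter-free) OTM-computable, and let $G_{1}$ be the first of the two $\mathbb{P}_{x}$-generic filters. Since for each $a\in x$ the set of conditions whose range contains $a$ is dense in $\mathbb{P}_{x}$, the union $g_{1}:=\bigcup G_{1}$ is a surjection $\omega\to x$; being a union of pairwise compatible finite injections it is also injective, hence a bijection, so $x$ is countable in $M[G_{1}]$, and — reading off a well-ordering of $\text{tc}(x)$ with the help of $g_{1}$ — $M[G_{1}]$ contains a code $c_{1}\subseteq\text{On}$ for $x$ in the sense of the coding defined above. (This is precisely what the poset $\mathbb{P}_{x}$, which is a set of $M$ by Lemma \ref{collapseset}, is designed to do.) I would also record that $M[G_{1}]\models\text{ZF}^{-}$, since set forcing preserves $\text{ZF}^{-}$, and that $\text{On}^{M[G_{1}]}=\text{On}$, since forcing adds no ordinals; the same applies to $M[G_{2}]$.

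Next I would run $P$ on the input $c_{1}$ inside $M[G_{1}]$. Since $F$ is OTM-computable and $c_{1}$ codes $x$, in $V$ the computation $P(c_{1})$ halts — say at stage $\gamma_{0}$ — with some output $d_{1}$ such that $\text{rep}(d_{1},F(x))$. The key observation is that OTM-computations are absolute between transitive models of $\text{ZF}^{-}$ that contain the input and all the ordinals: working in $M[G_{1}]$, replacement lets one form, by recursion on the ordinal time parameter, the run of $P$ on $c_{1}$, and because $\gamma_{0}\in\text{On}=\text{On}^{M[G_{1}]}$ the truncation of this run at $\gamma_{0}$ — which $M[G_{1}]$ can build — is the halting computation. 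Hence $d_{1}\in M[G_{1}]$; and since the set coded by a given code is recovered from it by a further $\text{ZF}^{-}$-definable recursion, $F(x)\in M[G_{1}]$. Re-running this argument with $G_{2}$ in place of $G_{1}$ gives $F(x)\in M[G_{2}]$ as well.

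Finally I would combine the two: by hypothesis $G_{1}$ and $G_{2}$ are mutually generic, i.e. $G_{1}\times G_{2}$ is $\mathbb{P}_{x}\times\mathbb{P}_{x}$-generic over $M$, and by the standard product-forcing lemma this yields $M[G_{1}]\cap M[G_{2}]=M$. Therefore $F(x)\in M[G_{1}]\cap M[G_{2}]=M$, as desired. The conceptual heart of the argument is this equality $M[G_{1}]\cap M[G_{2}]=M$ — the only place the two generics are genuinely used — but it is a well-known fact about product forcing and transfers without trouble. I expect the step requiring the most care in the present setting to be the interaction of the OTM-simulation with forcing: one must check that each $M[G_{i}]$ is a legitimate arena for running $P$ (it models $\text{ZF}^{-}$ and has all ordinals, so the set-sized halting computation lives inside it and absoluteness pins down its output), and one must make sure in the first step that the collapse actually places a genuine code for $x$ inside $M[G_{i}]$, and not merely a surjection onto $x$.
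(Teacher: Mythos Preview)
Your proposal is correct and follows essentially the same route as the paper: collapse $x$ via $\mathbb{P}_{x}$, obtain a code for $x$ in each $M[G_{i}]$, run $P$ there by absoluteness of OTM-computations, decode to get $F(x)\in M[G_{i}]$, and conclude via $M[G_{1}]\cap M[G_{2}]=M$. The only cosmetic difference is that where you say ``reading off a well-ordering of $\text{tc}(x)$ with the help of $g_{1}$'', the paper instead passes to $\text{tc}(x)$ at the outset so as to assume $x$ transitive, which makes the passage from the generic bijection to an actual code for $x$ immediate.
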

\begin{proof}
Let $P$ be a program witnessing the computability of $F$. 
Let $x\in M$ be as in the assumption of the Theorem. By passing to $\text{tr}(x)$ if necessary, we may assume without loss of generality that $x$ is transitive.
%Let $\mathbb{P}=\mathbb{P}_{x\omega}$ be the forcing from Lemma \ref{collapseset} for making $x$ well-ordered in order type $\omega$. 
Let $G_{1},G_{2}$ be mututally $M$-generic filters over $\mathbb{P}_{x}$ which exist by assumption. 
%evtl. muss noch sichergestellt werden, dass solche filter überhaupt existieren. wir sollten das wohl einfach in die annahme aufnehmen und bei anwendungen sicherstellen, dass es erfüllt ist:
%unten z.B. kann man wohl die Zarach-Konstruktion mit L beginnen lassen und in einem `hinreichend großen' rahmenmodell starten, das die ganzen zwischenerweiterungen kennt.
In $M_{1}$ and $M_{2}$, $x$ is well-ordered in order type $\alpha$ by $\bigcup{G_{1}}$ and $\bigcup{G_{2}}$, respectively.
Hence both $M[G_{1}]$ and $M[G_{2}]$ contain tape contents coding $x$ and thus
contain the computations of $P$ on these inputs. As ZF$^{-}$ models, $M[G_{1}]$ and $M[G_{2}]$ contain the decoding of every tape content they contain. Thus $F(x)\in M[G_{1}]\cap M[G_{2}]$. As
$G_{1}$ and $G_{2}$ are mutually generic, we have $M[G_{1}]\cap M[G_{2}]=M$, so $F(x)\in M$, as desired. %cite Kunen for mutual genericity!
\end{proof}

\textbf{Remark}: Again, some condition on the height of $M$ is required to ensure that the convergence of programs is absolute between $V$ and $M$. In particular, a parameter-free OTM can run for
more than $\alpha$ many steps, where $\alpha$ is minimal such that $L_{\alpha}\models\text{ZF}^{-}$.

\medskip

This suggests a general method for proving, given constructions $C_{1}$ and $C_{2}$, that $C_{1}\not\leq_{\text{gW}}C_{2}$. In general, find a class $A$ sufficiently closed under OTM-computability and a canonification $F$ of $C_{2}$
such that there is some $x\in A$ with the property that the closure of $F[A]$ under OTM-computability does not contain a $C_{1}$-solution for $x$. By Theorem \ref{closedness}, we can take for $A$ a transitive
class model $M$ of ZF$^{-}$. %If $M$ is closed under $C_{2}$, but not under $C_{1}$, we can achieve that $M$ coincides with the closure of $F[M]$ under OTM-computability. 
We summarize the most important special case of this method in the following lemma:

%\begin{lemma}{\label{mainlemma}}
%Let $C_{1}$, $C_{2}$ be construction problems. Assume that there are a canonification $F$ of $C_{2}$ and a transitive class-sized $M\models\text{ZF}^{-}$ and some $x\in M\cap\text{dom}(C_{1})$ such that $M$ is closed under $F$,
%but $\{y:C_{1}(x,y)\}\cap M=\emptyset$. Assume moreover that $x$ is such that there are (in $V$) some $\alpha\in\text{On}$
%and two mutually generic $\mathbb{P}_{x\alpha}$-generic filters $G_{1}$ and $G_{2}$ over $M$. Then $C_{1}\nleq_{\text{gW}}C_{2}$.
%\end{lemma}
%\begin{proof}
%Assume otherwise, and let $P$ and $Q$ be OTM-programs such that $(P,Q)$ witnesses the gW-reducibility of $C_{1}$ to $C_{2}$. Pick $F,M$ and $x$ as in the statement of the Lemma. Then $Q$ computes, for every code of $x$ as an input,
%a code for some (unique) set $y$. By Theorem \ref{closedness}, $y\in M$. As $M$ is closed under $F$, we have $F(y)\in M$. Now, for every code of $F(y)$ as an input, $P$ computes a code for some (unique) set $z$. Again by Theorem 
%\ref{closedness}, $z\in M$. Also, by the choice of $P$ and $Q$, we have $C_{1}(x,z)$. So $z\in \{y:C_{1}(x,y)\}\cap M$, so the latter is not empty, contradicting our assumptions.
%\end{proof}

\begin{lemma}{\label{mainlemma}}
Let $C_{1}$, $C_{2}$ be construction problems. Assume that there are a canonification $F$ of $C_{2}$ and a transitive class-sized $M\models\text{ZF}^{-}$ and some $x\in M\cap\text{dom}(C_{1})$ such that $M$ is closed under $F$,
but $\{y:C_{1}(x,y)\}\cap M=\emptyset$. Assume moreover that $x$ is such that there are (in $V$) two mutually generic $\mathbb{P}_{x}$-generic 
filters $G_{1}$ and $G_{2}$ over $M$. Then $C_{1}\nleq_{\text{gW}}C_{2}$.
\end{lemma}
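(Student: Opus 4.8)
The plan is to reduce Lemma \ref{mainlemma} to Theorem \ref{closedness} by showing that if $(P,Q)$ were to witness $C_{1}\leq_{\text{gW}}C_{2}$, then running this machinery over the model $M$ would produce a $C_{1}$-solution for $x$ lying in $M$, contradicting $\{y:C_{1}(x,y)\}\cap M=\emptyset$. So suppose for contradiction that $(P,Q)$ witnesses $C_{1}\leq_{\text{gW}}C_{2}$. First I would fix $x\in M\cap\text{dom}(C_{1})$ as in the hypothesis and, using the two mutually generic $\mathbb{P}_{x}$-generic filters $G_{1},G_{2}$ over $M$, observe (exactly as in the proof of Theorem \ref{closedness}) that each $M[G_{i}]$ contains a code $c_{i}$ for $x$, obtained from the $M$-generic well-ordering $\bigcup G_{i}$ of (the transitive closure of) $x$ in some order type $\alpha$.

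Next I would trace the two-step gW-procedure. Apply $Q$ to $c_{i}$ inside $M[G_{i}]$: since $Q$ is a parameter-free OTM-program and $M[G_{i}]\models\text{ZF}^{-}$ with $\text{On}^{M[G_{i}]}=\text{On}$, the computation is absolute, so $Q(c_{i})\downarrow=c_{i}'$ with $c_{i}'\in M[G_{i}]$ a code for some set $y_{i}$; moreover $y_{i}\in M[G_{i}]$ since $\text{ZF}^{-}$ models decode their tape contents. Now here is the delicate point: I want to apply the fact that $M$ is closed under $F$ to get $F(y_{i})\in M$. For this I need $y_{i}\in M$, not merely $y_{i}\in M[G_{i}]$. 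The clean way to secure this is to argue that $Q$, being computable, is a computable class function, and that $y_{i}$ is obtained by a computable function from $x$ together with a well-ordering of $\text{tc}(x)$; by mutual genericity the set $y_i$ it codes depends only on $x$ (not on which generic well-ordering we picked — indeed the definition of gW-reducibility demands clause (1) hold for \emph{every} code $c$ of $x$, and a code for the resulting $y$ coded by $c_i'$ is what $Q$ must output), hence $y_{1}$ and $y_{2}$ code the same set $y$, which therefore lies in $M[G_{1}]\cap M[G_{2}]=M$. Then $M$ closed under $F$ gives $F(y)\in M$, and since $M\models\text{ZF}^{-}$ is transitive it contains a code $c''$ for $F(y)$; indeed $M$ even contains a code for $F(y)$ built from a well-ordering of $\text{tc}(F(y))$ inside $M$.

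Finally, run $P$ on $c''$. Using the transitive closure argument as before, $\mathbb{P}_{F(y)}$-generic filters over $M$ exist (here I would either invoke the $0^{\sharp}$ convention to get genericity, or note that $P$'s behavior on \emph{some} code for $F(y)$ already lies in $M$ since $M$ already has such a code); running the parameter-free program $P$ on a code for $F(y)$ that lies in $M$ yields, by absoluteness of halting computations between $M$ and $V$ (which holds because $\text{On}^{M}=\text{On}$ bounds the running time as in Lemma \ref{CardMethod}), a convergent computation $P(c'')\downarrow=c'''$ with $c'''\in M$ a code for some $z$, and $z\in M$ by decoding in $\text{ZF}^{-}$. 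By clause (3) of Definition \ref{generalizedWeihrauch}, $C_{1}(x,z)$ holds, so $z\in\{y:C_{1}(x,y)\}\cap M$, contradicting the hypothesis that this intersection is empty. Hence no such $(P,Q)$ exists, i.e. $C_{1}\not\leq_{\text{gW}}C_{2}$.

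The main obstacle I anticipate is the middle step: justifying rigorously that the intermediate set $y$ coded by $Q(c_{i})$ is the same in $M[G_{1}]$ and $M[G_{2}]$ and hence lands in $M$. One must be careful that $Q$ is allowed to output \emph{different codes} for the same $y$ on different input codes $c_{1},c_{2}$ — the gW-definition only promises that the \emph{set} $y$ is fixed once $x$ is, via a canonification $F$ of $C_2$ applied to it — so the argument that $y_1=y_2$ should go through the intersection-of-generic-extensions fact of Theorem \ref{closedness} applied to the computable class function realized by $Q$, rather than through any naive uniqueness claim about codes. A secondary technical point is ensuring the $\mathbb{P}_{F(y)}$-genericity needed to re-invoke the absoluteness machinery in the last step; the cleanest fix, as noted, is to observe that $M$ already contains a code for $F(y)$ obtained from an internal well-ordering, so $P$ can be run on that code directly inside $M$ and no further forcing over $M$ is required there.
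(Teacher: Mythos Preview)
Your argument is essentially the paper's own proof: assume a witnessing pair $(P,Q)$, invoke Theorem~\ref{closedness} to place the intermediate set $y$ in $M$, use closure of $M$ under $F$ to get $F(y)\in M$, invoke Theorem~\ref{closedness} again to place $z$ in $M$, and derive the contradiction. The paper simply cites Theorem~\ref{closedness} as a black box at both stages rather than unfolding the mutual-genericity argument inline as you do.

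One caveat: your alternative route in the final step, where you claim that ``$M$ even contains a code for $F(y)$ built from a well-ordering of $\text{tc}(F(y))$ inside $M$'', is not justified, since $M$ is only assumed to model $\text{ZF}^{-}$ and need not well-order $\text{tc}(F(y))$; indeed the whole point of the applications is that $M$ may fail choice. You should therefore commit to your first option (a second application of Theorem~\ref{closedness}, with the required $\mathbb{P}_{F(y)}$-generics supplied by the standing $0^{\sharp}$ assumption), which is exactly what the paper does implicitly.
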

\begin{proof}
Assume otherwise, and let $P$ and $Q$ be OTM-programs such that $(P,Q)$ witnesses the gW-reducibility of $C_{1}$ to $C_{2}$. Pick $F,M$ and $x$ as in the statement of the Lemma. Then $Q$ computes, for every code of $x$ as an input,
a code for some (unique) set $y$. By Theorem \ref{closedness}, we have $y\in M$. As $M$ is closed under $F$, we have $F(y)\in M$. Now, for every code of $F(y)$ as an input, $P$ computes a code for some (unique) set $z$. Again by Theorem 
\ref{closedness}, $z\in M$. Also, by the choice of $P$ and $Q$, we have $C_{1}(x,z)$. So $z\in \{y:C_{1}(x,y)\}\cap M$, so the latter is not empty, contradicting our assumptions.
\end{proof}

%\section{Effective Reduction and Provability}

%Effective reducibility seems not to correspond to equivalence over some reasonable theory. Consider this further. At least, we have the following:

%\begin{thm}(This seems to use some large cardinal assumption. Which one?)

%There is no OTM-computable (or OTM-decidable: a fortiori, no recursively enumerable etc.) $\in$-theory $T$ such that ZFC+V=L+T is consistent and such that 
%the question whether a program $P$ halts is absolute between transitive models of $T$.
%\end{thm}
%\begin{proof}
%Otherwise, there would be $L$-levels in which $T$ is valid; we could run an OTM to search for the first such level. The OTM would stop, but not within that level, a contradiction.
%\end{proof}

\section{Results on Generalized Effective Reducibility}

As a sample application of the notions and methods developed above, we consider variants of the axiom of choice with respect to effective reducibility.

\begin{defini}{\label{choiceprinciples}}
Denote by AC the statement that for all sets $x$, there is a function $f$ such that $f(\emptyset)=\emptyset$ and for $y\in x$, if $y\neq\emptyset$, then $f(y)\in y$.
Denote by AC$^{\prime}$ the statement that for all sets $x$ whose elements are non-empty and mutually disjoint, there is a set $r$ such that $|r\cap y|=1$ for all $y\in x$.
Denote by WO the well-ordering principle, i.e. the statement that for every set $x$, there is an ordinal $\alpha$ and a bijection $f:\alpha\leftrightarrow x$.
Finally, denote by ZL Zorn's lemma, i.e. the statement that, for every partially ordered set $(X,\leq)$ in which every ascending chain has an upper bound, there is
a $\leq$-maximal element in $X$.
\end{defini}

It is not hard to see that all of these principles are equivalent in the sense of reducibility: The usual equivalence proofs explain, modulo a transfinite version of
Church's thesis, how each of these principles can be reduced to any other. This is perhaps not entirely obvious for WO$\leq$AC, as the reduction seems to 
require a choice function for the power set of a given set and the power set of a set $x$ is not OTM-computable from $x$ (e.g. by Lemma \ref{cardraising}). We give the proof as an example.

\begin{prop}{\label{woredac}}
WO$\leq$AC
\end{prop}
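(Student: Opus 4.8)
The plan is to reduce WO to AC by the standard Zermelo-style transfinite recursion, carefully checking that every step except the choice-function calls is OTM-computable. First I would fix an OTM-program $P$ with a miracle command that is supposed to realize the reduction. On input a code $c$ for a set $x$, the program should first (effectively) form a code for the set $\mathcal{P}(x)\setminus\{\emptyset\}$ together with $\{\emptyset\}$, i.e. a code for an input to AC — this is where one must be slightly careful: although $\mathcal{P}(x)$ is not OTM-computable from $x$ as an object, a \emph{code} for $\mathcal{P}(x)$ \emph{is} computable from a code for $x$, since from a well-ordering of $\operatorname{tc}(x)$ one can enumerate all subsets of $x$ (as characteristic functions indexed by the order type of $x$) and assemble them into a code in the sense of the paper's coding definition. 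I would make this the single invocation of the AC-miracle: apply $G$ to the resulting code, obtaining a code $d$ for some choice function $g$ on $\mathcal{P}(x)\setminus\{\emptyset\}$.

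Next I would describe the recursion. Using $g$ (which is now available as a coded set on the tape, hence decodable and usable by the OTM), define by transfinite recursion the sequence $x_{0}=g(x)$, and $x_{\xi}=g(x\setminus\{x_{\eta}:\eta<\xi\})$ as long as $x\setminus\{x_{\eta}:\eta<\xi\}\neq\emptyset$, halting at the first ordinal $\alpha$ for which this remaining set is empty. This is a routine OTM-computation: at each stage the machine has on its scratch tape a code for the partial enumeration $\{x_{\eta}:\eta<\xi\}$, computes a code for the complement in $x$ (effective from codes), queries the coded function $g$ for its value on that complement, and appends the new element; at limit stages it takes unions of the codes computed so far, which is again OTM-computable. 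The ordinal $\alpha$ reached is bounded (it is at most $|\operatorname{tc}(x)|^{+}$ or so), so the machine halts. The output is a code for the bijection $f:\alpha\leftrightarrow x$, $\eta\mapsto x_{\eta}$, which witnesses WO$(x)$; I would note that the construction produces a genuine code in the sense of the paper's Definition, since along the way we are simultaneously building a well-ordering of $\operatorname{tc}(x)$ (first enumerate $x$ itself, then recursively the codes of its elements).

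The main obstacle — really the only subtle point — is the very first step: convincing oneself that a code for $\mathcal{P}(x)$ is OTM-computable from an arbitrary code $c$ for $x$, uniformly and independently of which code $c$ was handed to us (as required by Definition~\ref{reducibility}, and emphasized in the Remark there). I would handle this by observing that from $c$ one can decode $x$ and, more importantly, recover a well-ordering of $\operatorname{tc}(x)$ in some order type $\beta$; the subsets of $x$ then correspond to subsets of $\{f^{-1}(y):y\in x\}\subseteq\beta$, which can be systematically enumerated on the tape (an OTM can run through all $A\subseteq\gamma$ for a fixed ordinal $\gamma$), and each such subset yields, together with the already-available codes for the elements of $x$, a code for the corresponding subset of $x$; packaging these with Cantor pairing gives a code for $\mathcal{P}(x)$. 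One should also remember to adjoin $\emptyset$ as an element of the AC-instance and to let $P$ use the miracle-value's behaviour on $\emptyset$ to conform to the $f(\emptyset)=\emptyset$ clause, but this is a triviality. Everything else is bookkeeping with codes, which I would not spell out in detail, invoking the informal transfinite Church's thesis as the paper does.
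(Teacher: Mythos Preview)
Your argument has a genuine gap at exactly the step you flag as ``the only subtle point'': a code for $\mathcal{P}(x)$ is \emph{not} OTM-computable from a code for $x$. This is precisely Lemma~\ref{someconstructions}(3), and the paper even remarks, immediately before Proposition~\ref{woredac}, that the naive reduction via a choice function on $\mathcal{P}(x)$ fails for this reason. Concretely, by the cardinality method (Lemma~\ref{CardMethod}), an OTM on an input of infinite size $\kappa$ halts in fewer than $\kappa^{+}$ steps and hence produces an output of size at most~$\kappa$; any code for $\mathcal{P}(x)$ has size $2^{\kappa}>\kappa$, so no such code can appear as the output. Your claim that ``an OTM can run through all $A\subseteq\gamma$ for a fixed ordinal $\gamma$'' is therefore false for infinite~$\gamma$: the machine simply does not run long enough to write that many objects down.

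There is a second, decisive reason your strategy cannot work: it uses the miracle only once, so if it succeeded it would in fact establish $\text{WO}\leq_{\text{gW}}\text{AC}$, directly contradicting Theorem~\ref{nonreducible}. The paper's proof instead exploits the fact that $\leq$ (unlike $\leq_{\text{gW}}$) allows unboundedly many miracle calls: at stage~$\iota$ one applies the AC-oracle to the \emph{singleton} $\{x_{\iota}\}$, where $x_{\iota}$ is the set of elements not yet chosen, obtaining one new element $y_{\iota}\in x_{\iota}$; the recursion then proceeds exactly as you describe in your second paragraph. No power set is ever formed.
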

\begin{proof}
Given a set $x$ and a solution $F$ for AC, construct $\alpha\in\text{On}$ along with a bijection $f:\alpha\leftrightarrow x$ recursively as follows: To begin with, set $x_{0}=x$ and $f_{0}=\emptyset$.
In the $\iota$th step, apply $F$ to $\{x_{\iota}\}$ to get some $y_{\iota}\in x_{\iota}$. %Then apply $F$ to $\{y_{\iota}\}$ to get some $z_{\iota}\in y_{\iota}$.
Let $f_{\iota+1}=f_{\iota}\cup\{(\iota,y_{\iota})\}$, $x_{\iota+1}=x_{\iota}\setminus\{y_{\iota}\}$. At a limit stage $\lambda$, let $x_{\lambda}=\bigcap_{\iota<\lambda}x_{\iota}$ and $f_{\lambda}=\bigcup_{\iota<\lambda}f_{\iota}$.
Once $x_{\iota}=\emptyset$ (which must eventually happen, as $x$ is a set), stop the construction and return $f_{\iota}$, which will be a bijection between $\iota$ and $x$. This procedure can be carried out on an OTM equipped with $F$.
%In each step, given the remaining set $x$, apply the choice function oracle to $\{x\}$.
\end{proof}

The picture becomes much more interesting when we turn to gW-reducibility. In fact, 
we can use Lemma \ref{mainlemma} to show that the well-ordering principle is not generalized Weihrauch reducible to the axiom of choice:

\begin{thm}{\label{nonreducible}}
WO$\not\leq_{\text{gW}}$AC.
\end{thm}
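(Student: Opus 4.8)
The plan is to apply Lemma \ref{mainlemma} with $C_{1}=$ WO and $C_{2}=$ AC. So I need to produce a canonification $F$ of AC, a transitive class-sized model $M\models\text{ZF}^{-}$ with $\text{On}^{M}=\text{On}$, and some $x\in M$ such that $M$ is closed under $F$, no well-ordering of $x$ lies in $M$, and there are two mutually generic $\mathbb{P}_{x}$-generic filters over $M$ in $V$. The natural candidate for $M$ is a symmetric extension of $L$ (using $0^{\sharp}$ to get the required generics) in which some specific set $x$ — e.g. a Russell-style set of pairs of socks, or more simply an amorphous-type set, or a set that is not well-orderable — lives but cannot be well-ordered. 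The point of working over $L$ rather than a countable transitive model, as flagged in the Convention, is exactly to have $\text{On}^{M}=\text{On}$ so that OTM-convergence is absolute between $M$ and $V$ (cf. the Remark after Theorem \ref{closedness}).

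The key steps, in order, would be: (1) Fix a suitable notion of forcing over $L$ and a group of automorphisms with a normal filter of subgroups, producing a symmetric submodel $M$ of the generic extension such that $M\models\text{ZF}^{-}$ (indeed $\text{ZF}$), $\text{On}^{M}=\text{On}$, and $M$ contains a set $x$ with no well-ordering in $M$ — the classical Cohen/Fraenkel--Mostowski-style construction, transplanted above $L$ via $0^{\sharp}$, does this. (2) Define $F$: since AC asks, for each set $u$, for a choice function on $u$, and since $M\models\text{ZF}$ has \emph{some} choice functions (just not uniformly, and not for $x$ itself regarded as a family), I must exhibit a global choice functional $F$ on $V$ which restricts correctly on $M$, i.e. $F(u)\in M$ is a choice function for $u$ whenever $u\in M$. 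The cleanest route: use a global well-order of $V$ (available since we may assume, or arrange, enough structure — or just define $F$ on codes so that from a code for $u$ one reads off a well-order of $\text{tc}(u)$ and hence a canonical choice function) and check that when the input code happens to code a set $u\in M$, the resulting choice function is an element of $M$. Here one uses that $M$ is transitive and closed under definable-from-parameters operations applied to sets it contains; a choice function built from a well-order of $\text{tc}(u)$ that is \emph{given in the code} need not itself depend only on $u$, which is fine — what matters is that it is built inside the transitive set $\text{tc}(u)\cup\{u\}$-ish material, hence lands in $M$. (3) Verify $M$ is closed under this $F$. (4) Verify the mutual-genericity hypothesis: since $x$ is not well-orderable in $M$, $\mathbb{P}_{x}$ (finite injective partial enumerations of $x$ by $\omega$) is a nontrivial forcing, and $0^{\sharp}$ supplies two mutually generic filters over $M$ in $V$. (5) Conclude by Lemma \ref{mainlemma} that WO $\not\leq_{\text{gW}}$ AC.

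The main obstacle I expect is step (2): getting a canonification $F$ of AC that is \emph{genuinely} a canonification on all of $V$ (so defined on every code of every set, outputting a code for an actual choice function) while simultaneously being ``$M$-respecting'' in the sense that $F(u)\in M$ for $u\in M$. The subtlety is that AC as formalized speaks of choice functions for arbitrary sets $u$; for $u\in M$ the desired output is a choice function on $u$ that happens to be an element of $M$, and one must be careful that the global recipe, when fed a code coming from $M$, does not secretly require well-ordering information about $x$ that $M$ lacks. The resolution is that a code for $u$ already encodes a well-order of $\text{tc}(u)$, from which a choice function on $u$ is obtained by a $\Delta_{0}$-ish recursion, and the resulting set — being ``small'' relative to $\text{tc}(u)$ and constructed by an absolute procedure — lies in any transitive $\text{ZF}^{-}$-model containing $u$; in particular in $M$. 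A secondary point to check carefully is that WO really fails for $x$ in $M$ in the strong sense needed, namely that $\{y:\text{WO}(x,y)\}\cap M=\emptyset$, i.e. no bijection from an ordinal onto $x$ exists in $M$ — this is precisely the defining property of the symmetric model, so it should be immediate once $M$ is correctly set up, but it is the crux of why the whole argument works and deserves an explicit sentence.
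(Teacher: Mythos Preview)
Your plan has a genuine gap at step (2), and it stems from the choice of model in step (1). You propose a single symmetric extension of $L$ of Cohen/Fraenkel--Mostowski type, and you explicitly want $M\models\text{ZF}$ (full ZF, with power set). But in any model of full ZF, AC and WO are equivalent; so if $x\in M$ is not well-orderable in $M$, then AC fails in $M$ as well. Concretely, $\mathcal{P}(x)\setminus\{\emptyset\}\in M$ has no choice function in $M$ (a choice function would let you well-order $x$ by transfinite recursion). Hence for this $u:=\mathcal{P}(x)\setminus\{\emptyset\}$, \emph{every} canonification $F$ of AC has $F(u)\notin M$, and $M$ cannot be closed under any such $F$. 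Lemma \ref{mainlemma} therefore does not apply.

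Your attempted workaround --- defining $F$ so that ``from a code for $u$ one reads off a well-order of $\text{tc}(u)$ and hence a canonical choice function'' --- does not rescue this, because a canonification is by definition a class function $V\to V$ on \emph{sets}, not on codes. Different codes for the same $u$ would yield different choice functions, so your recipe does not determine a function of $u$. And even granting a choice of code, the resulting choice function is built from a well-order of $\text{tc}(u)$ that lives in $V$, not in $M$; there is no reason it lands in $M$, and for $u=\mathcal{P}(x)\setminus\{\emptyset\}$ it provably does not.

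The paper's proof avoids this by using a model of $\text{ZF}^{-}+\text{AC}+\neg\text{WO}$ (Zarach's construction: an ascending union of symmetric extensions of $L$). The absence of power set is exactly what allows AC to hold while WO fails --- you cannot form $\mathcal{P}(x)$ to run the well-ordering argument. Since $N\models\text{AC}$, every $u\in N$ has a choice function in $N$, so one can take a canonification $F$ with $F[N]\subseteq N$; the non-well-orderable set is countable in $V$, giving the required mutually generic filters over $N$, and Lemma \ref{mainlemma} applies. The essential idea you are missing is that the separation of AC from WO must happen at the level of $\text{ZF}^{-}$, not $\text{ZF}$.
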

\begin{proof}
(Sketch) We use Lemma \ref{mainlemma}. In Theorem D.-A.C. of \cite{Z}, it is shown how to construct a transitive model of ZF$^{-}+$AC$+\neg$WO as a union of an ascending chain
of symmetric extensions of a transitive ground model $M$ of ZF$^{-}$. Starting with $M=L$, it is easily checked that, under the assumption that $0^{\sharp}$ exists,
the construction leads to a definable transitive class model $N$ of ZF$^{-}$+AC such that some set $A\in N$ that is non-wellorderable in $N$ is countable in $V$ and moreover
$\mathbb{P}_{A}$ is countable and thus has two mutually generic filters over $N$. Hence the assumptions of Lemma \ref{mainlemma} are satisfied and the non-reducibility follows.

%Assume for a contradiction that WO$\neg\leq_{\text{gW}}$AC and that $P$ and $Q$ are OTM-programs witnessing this, so that $Q\circ{G}\circ P$ computes, for every input set $x$,
%a (code for a) well-ordering $x$ whenever $G$ is a canonification of AC.

%By \cite{Z}, there is a transitive ZF$^{-}$-model $M$ such that On$^{M}=$On and $M\models\text{AC}+\neg\text{WO}$. As $M\models\text{AC}$, there is a (class) function $F:M\rightarrow M$
%assigning to each $x\in M$ a choice function as in Definition \ref{choiceprinciples}. Let $\hat{F}\supseteq F$ be a canonification of $\text{AC}$ (i.e. an extension $F$ to all sets).
%%diese klassen (F, \hat{F}) sind nicht unbedingt definierbar. wir sollten etwas dazu sagen... (notlösungen: echte klasse von unerreichbaren, wir arbeiten nur drunter. oder globale auswahl.)
%%andere möglichkeit: vergiss das F: wir haben eine art nicht-deterministisches C_2-orakel, und damit soll die reduktion immer funktionieren?
%Let $x\in M$ be such that $M$ contains no well-ordering of $x$.
%Now, by Lemma \ref{closedness}, $P$, when run in an input coding $x$, converges to the code of some set $y$ such that $y\in M$. By definition of $\hat{F}$, we have $z:=\hat{F}(y)=F(y)\in M$.
%Again by Lemma \ref{closedness}, $Q$, when run in an input coding $z$, converges to the code of some set $s\in M$. As $M$ contains no well-ordering for $x$, $s$ cannot be such a well-ordering.
%So $(Q\circ\hat{F}\circ P)(x)$ is not a well-ordering of $x$, contradicting our assumption.
\end{proof}

Many of the other relations between choice principles are effective, however:

%\begin{thm}{\label{reducible}}
% \begin{enumerate}
%  \item $\text{AC}\equiv_{\text{gW}}\text{AC}^{\prime}$
%  \item $\text{AC}\leq_{\text{gW}}\text{ZL}\leq_{\text{gW}}\text{WO}$, where ZL denotes Zorn's lemma.
% \end{enumerate}
%\end{thm}
%\begin{proof}

%The proofs consists in checking that the usual equivalence proofs over ZF in fact effectivize. This is trivial for (1). We give some detail on (2) as an exemplary case.

%(2) To see that $\text{AC}\leq_{\text{gW}}\text{ZL}$, let $x$ be a familiy of non-empty sets. [wie weiter? menge der partiellen auswahlfunktionen ist nicht effektiv?] (was wir jedenfalls kriegen,
%ist AC$\leq_{\text{gW}}$WO...)

%For ZL$\leq_{\text{gW}}$WO, let $(x,\leq)$ be a partially ordered set satisfying the assumptions of ZL. Let $Q$ be an OTM-program that, given a code $c((x,\leq))$ for $x$ on the input tape, copies $c(x)$ to
%the oracle tape. After applying any canonification for WO, the oracle tape will contain a code $c^{\prime}$ for a well-ordering $<_{x}$ of $x$. Now let $P$ be an OTM-program that runs as follows: Given an (initially empty)
%set $X$ of elements of $x$ on the scratch tape, compute $x\setminus X$ and search through it for the $<_{x}$-minimal element $e$ great than all elements of $X$. If none exists, return $e$, otherwise
%set $X=X\cup\{e\}$ and continue. This computes a maximal element of $(x,\leq)$, so $(P,Q)$ witnesses ZL$\leq_{\text{gW}}$WO.
%\end{proof}

\begin{thm}{\label{reducible}}
(1) $\text{AC}^{\prime}\equiv_{\text{gW}}\text{AC}\leq_{\text{gW}}\text{ZL}$

(2) $\text{ZL}\leq_{\text{gW}}\text{WO}$
\end{thm}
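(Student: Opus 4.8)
\textbf{Proof plan for Theorem \ref{reducible}.}

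The plan is to produce, for each stated relation, an explicit pair $(P,Q)$ of OTM-programs witnessing gW-reducibility, following the informal equivalence proofs among choice principles but being careful that only a single oracle call is made. For $\text{AC}^{\prime}\leq_{\text{gW}}\text{AC}$: given a set $x$ of pairwise disjoint nonempty sets with code $c$, let $Q$ simply return $c$ (the identity), so $y=x$; then given a code for a choice function $F(x)=f$ with $f(z)\in z$ for $z\in x$, let $P$ compute (uniformly from the code) the range $r:=\{f(z):z\in x\}$ and output a code for it. Since the elements of $x$ are mutually disjoint, $|r\cap z|=1$ for all $z\in x$, so $\text{AC}^{\prime}(x,r)$ holds. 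For the converse $\text{AC}\leq_{\text{gW}}\text{AC}^{\prime}$: given a set $x$, have $Q$ build $y:=\{\{z\}\times z : z\in x,\ z\neq\emptyset\}$ (the standard disjointification), have $P$ take the selector $r$ for $y$ and read off, for each $z\in x$, the unique pair $(z,w)\in r$ with first coordinate $z$, returning the function $z\mapsto w$ together with $\emptyset\mapsto\emptyset$; this is a choice function for $x$.

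For $\text{AC}\leq_{\text{gW}}\text{ZL}$: given $x$, let $Q$ output (a code for) the poset $X$ of injective partial functions $\omega$--- no, of injective partial functions from some ordinal into the \emph{nonempty} elements of $x$ whose graph picks at most one point from each such element, ordered by inclusion; more directly, take $X$ to be the set of partial choice functions on subsets of $x$, ordered by extension. Every chain in $X$ has its union as an upper bound, so $\text{ZL}$ applies and $F(X)$ is a maximal element $m$; then $P$ checks (uniformly) that $m$ is a total choice function on the nonempty members of $x$ --- maximality forces this, else one could extend $m$ at an uncovered nonempty $z$ --- and outputs $m\cup\{(\emptyset,\emptyset)\}$. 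For $\text{ZL}\leq_{\text{gW}}\text{WO}$: given a poset $(X,\le)$ in which every ascending chain has an upper bound, let $Q$ just pass along a code for $X$ (the underlying set); from a code for a bijection $g:\alpha\leftrightarrow X$ the program $P$ transfinitely constructs a $\le$-increasing sequence by, at stage $\iota$, taking the $g$-least element of $X$ strictly above all previously chosen elements if one exists and otherwise halting; the construction must terminate (it injects into $X$), and the last chosen element, together with the chain-completeness hypothesis, yields a maximal element, which $P$ outputs.

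The step I expect to be the main obstacle is not any single construction but the bookkeeping required by Definition \ref{generalizedWeihrauch}: the programs $P$, $Q$ must work \emph{uniformly in the code}, i.e. for \emph{every} code $c$ of the input and \emph{every} canonification $F$ of the target problem and \emph{every} code of the oracle value, with no hidden dependence of the output object on the representation. Concretely, for $\text{ZL}\le_{\text{gW}}\text{WO}$ I must ensure the terminating transfinite recursion inside $P$ is genuinely OTM-executable on an arbitrary code of the bijection $g$ --- this is where one invokes the transfinite Church's-thesis-style fact (implicit throughout the paper) that such bounded recursions along a coded well-ordering are OTM-computable --- and that the resulting maximal element does not depend on which code of $g$ was handed in beyond what the problem allows; since $\text{ZL}$ is multi-valued this is automatic, but it should be remarked. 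A secondary point worth checking is Lemma \ref{collapseset}-style: that the posets $Q$ builds (partial choice functions, partial $\le$-chains) are genuinely sets and have OTM-computable codes from the input code, which is routine. Finally, $\text{AC}^{\prime}\equiv_{\text{gW}}\text{AC}$ then follows by combining the two one-line reductions above with Lemma \ref{orderproperties}.
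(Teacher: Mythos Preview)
Your overall strategy matches the paper's: effectivize the textbook equivalence proofs. For $\text{AC}^{\prime}\equiv_{\text{gW}}\text{AC}$ and for $\text{ZL}\leq_{\text{gW}}\text{WO}$ your reductions are correct and essentially identical to what the paper sketches (the paper gives detail only for the latter, in the same greedy-chain form you describe, and declares the rest ``trivial'').

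There is, however, a genuine gap in your treatment of $\text{AC}\leq_{\text{gW}}\text{ZL}$. You let $Q$ output the poset of all partial choice functions on $x$ and dismiss the verification that this is OTM-computable from $x$ as ``routine'' in the style of Lemma~\ref{collapseset}. It is not: Lemma~\ref{collapseset} concerns \emph{finite} partial functions and produces a set of size at most $|x|$, whereas the set of all partial choice functions on $x$ is a subset of $\mathcal{P}\bigl(\bigcup_{z\in x}\{z\}\times z\bigr)$ and typically has cardinality $2^{|x|}$ (already when $x$ is a countable family of two-element sets). By Lemma~\ref{CardMethod}, no OTM-program can take a code for a set of infinite cardinality $\kappa$ to a code for a set of cardinality $>\kappa$, so the program $Q$ you describe cannot exist. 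Cutting down to finite partial choice functions restores computability of $Q$ but destroys the chain-completeness hypothesis needed to invoke $\text{ZL}$. The paper offers nothing here beyond the word ``trivial'', so to close this you need either a genuinely different instance of Zorn's lemma---a chain-complete poset of size bounded by $|\text{tc}(x)|$ whose maximal elements still encode total choice functions for $x$---or a different argument altogether; the standard textbook reduction does not effectivize in the way you (and, apparently, the paper) assume.
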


\begin{proof}

The proofs consists in checking that the usual equivalence proofs over ZF in fact effectivize. This is trivial for (1). We give some detail on (2) as an exemplary case.

(2) %To see that $\text{AC}\leq_{\text{gW}}\text{ZL}$, let $x$ be a familiy of non-empty sets. [wie weiter? menge der partiellen auswahlfunktionen ist nicht effektiv?] (was wir jedenfalls kriegen,
%ist AC$\leq_{\text{gW}}$WO...)
For ZL$\leq_{\text{gW}}$WO, let $(x,\leq)$ be a partially ordered set satisfying the assumptions of ZL. Let $Q$ be an OTM-program that, given a code $c((x,\leq))$ for $x$ on the input tape, copies $c(x)$ to
the miracle tape. After applying any canonification for WO, the miracle tape will contain a code $c^{\prime}$ for a well-ordering $<_{x}$ of $x$. Now let $P$ be an OTM-program that runs as follows: Given an (initially empty)
set $X$ of elements of $x$ on the scratch tape, compute $x\setminus X$ and search through it for the $<_{x}$-minimal element $e$ great than all elements of $X$. If none exists, return $e$, otherwise
set $X=X\cup\{e\}$ and continue. This computes a maximal element of $(x,\leq)$, so $(P,Q)$ witnesses ZL$\leq_{\text{gW}}$WO.
\end{proof}

\textbf{Remark}: We do not know whether ZL belongs to one of the gW-degrees $[\text{AC}]_{\text{gW}}$, $[\text{WO}]_{\text{gW}}$, is reducible to AC, lies strictly in between or is incompatible with AC.
We suspect that ZL$\nleq_{\text{gW}}$AC. Our current state of knowledge
hence gives some meaning to the humorous claim that `The Axiom of Choice is obviously true, the well-ordering principle obviously false, and who can tell about Zorn's lemma?'.

\section{Conclusion and Further Work}

We have introduced notions of effectivity, reducibility and `case-wise' reducibility applicable to mathematical objects of arbitrary cardinality. The approach to effectivity is supported by the remarkable conceptual stability of
ordinal computability (see e.g. \cite{Fi} or \cite{Ca}) and moreover, while not equivalent to e.g. the approach by Hodges, agrees with it concerning the results obtained so far. With regard to reducibility, we have seen
how set-theoretical techniques can be used to distinguish between various versions of set-theoretical principles usually regarded as equivalent.

Clearly, there is a host of questions asking which statements are effectively reducible or gW-reducible to which others. This may be viewed as a cardinality-independent version of reverse mathematics (as e.g. considered in \cite{Sh}) 
and the theory of the Weihrauch lattice. Apart from that, it may be interesting to consider variants of these notions with parameter-free computability replaced by other models of transfinite computation, like 
Infinite Time Turing Machines (\cite{HL}) or OTMs with ordinal parameters. Another worthwhile topic would be to replace (relativized) computability with (relativized) recognizability (see e.g. \cite{CSW}). 

Finally, various notions from classical computability theory could be incorporated into our framework: For example, one should be able to make sense of the concept of a `random construction' and ask
whether there are interesting non-effective constructions that are reducible to them. We will also consider candidates for a sensible notion of a `jump operator' for construction problems, a notion
that led to a number of fascinating results about Weihrauch reducibility (\cite{BGM}).

\end{document}